\documentclass[11pt]{amsart}

\usepackage{amssymb,amsfonts,amsmath}
\usepackage{scalerel}
\usepackage{makecell}
\usepackage{amsthm, stmaryrd}
\usepackage{graphicx}
\usepackage[all,arc]{xy}
\usepackage{hyperref}
\hypersetup{colorlinks,allcolors=violet}
\usepackage{enumerate}
\usepackage{bbm}
\usepackage{dsfont}
\usepackage{mathrsfs}
\usepackage{tikz-cd}
\usetikzlibrary{shapes}
\usepackage{import}
\usepackage{float}
\restylefloat{table}
\usepackage{multirow}
\usepackage{pdflscape}
\usepackage{listofitems}
\usepackage{standalone}
\usepackage[T1]{fontenc}
\usepackage[toc,page]{appendix}
\usepackage{booktabs}
\usepackage{array}
\usepackage[margin=1in]{geometry}
\usepackage{mathtools}
\usetikzlibrary{decorations.pathmorphing}
\usepackage{mathrsfs}
\usepackage{cancel}
\usepackage{relsize}
\newtheorem{thm}{Theorem}[section]
\newtheorem*{thm*}{Theorem}
\newtheorem*{metathm*}{Meta Theorem}

\newtheorem*{setup*}{Setup}

\newtheorem{innercustomthm}{Theorem}
\newenvironment{customthm}[1]
  {\renewcommand\theinnercustomthm{#1}\innercustomthm}
  {\endinnercustomthm}

\newtheorem{innercustomcor}{Corollary}
\newenvironment{customcor}[1]
  {\renewcommand\theinnercustomcor{#1}\innercustomcor}
  {\endinnercustomcor}

\newtheorem{cor}[thm]{Corollary}
\newtheorem{prop}[thm]{Proposition}
\newtheorem{lem}[thm]{Lemma}

\theoremstyle{definition}
\newtheorem{defn}[thm]{Definition}

\newtheorem{rem}[thm]{Remark}

\newtheorem*{thm1.2}{\textrm{Theorem 1.2}}

\theoremstyle{remark}
\newcommand{\Mbar}{\overline{\mathcal{M}}}

\newcommand{\M}{\mathcal{M}}

\newtheorem{sch}[thm]{Scholium}

\newcommand{\floor}[1]{\left\lfloor #1 \right\rfloor}

\newcommand{\Z}{\mathbb{Z}}

\newcommand{\QQ}{\mathbb{Q}}

\renewcommand{\P}{\mathbb{P}}
\newcommand{\bbS}{\mathbb{S}}

\newcommand{\Aut}{\operatorname{Aut}}

\newcommand{\ch}{\operatorname{ch}}

\newcommand{\cat}[1]{\mathsf{#1}}

\newcommand{\nocontentsline}[3]{}
\let\origcontentsline\addcontentsline
\newcommand\stoptoc{\let\addcontentsline\nocontentsline}
\newcommand\resumetoc{\let\addcontentsline\origcontentsline}
\def\P{\mathbb{P}}

\def\Z{\mathbb{Z}}

\def\calC{\mathcal{C}}

\def\calL{\mathcal{L}}
\def\calM{\mathcal{M}}

\def\calS{\mathcal{S}}
\def\calT{\mathcal{T}}

\def\M{\mathcal{M}}

\newcommand{\Res}{\operatorname{Res}}

\newcommand{\Exp}{\operatorname{Exp}}
\newcommand{\Log}{\operatorname{Log}}

\newcommand{\ve}{\varepsilon}

\newcommand{\triv}{\operatorname{triv}}

\newcommand{\Pic}{\mathrm{Pic}}
\newcommand\cycle[2][\,]{%
  \readlist\thecycle{#2}%
  (\foreachitem\i\in\thecycle{\ifnum\icnt=1\else#1\fi\i})%
}

\newcommand{\gr}{\mathrm{gr}}

\makeatletter
\let\c@equation\c@thm
\makeatother
\numberwithin{equation}{section}

\graphicspath{ {images/} }

\title{Euler characteristics of the universal Picard stack}

\author[S. Kannan]{Siddarth Kannan}\address{Department of Mathematics, Massachusetts Institute of Technology}
\email{\url{spkannan@mit.edu}}

\begin{document}
\maketitle

\begin{abstract}
    We study $\bbS_n$-equivariant weight-graded and topological Euler characteristics of the universal Picard stack $\Pic_{g, n}^d \to \M_{g, n}$ of degree-$d$ line bundles over $\M_{g, n}$. We prove that in the weight-zero and topological cases, the generating function for Euler characteristics of $\Pic_{g, n}^d$ is obtained from the corresponding one for $\M_{g, n}$ by an extremely simple combinatorial transformation. This lets us deduce closed formulas for the two generating functions, taking as input the Chan--Faber--Galatius--Payne formula in the weight-zero case and Gorsky's formula in the topological case. As an immediate corollary, we obtain closed formulas for the weight-zero and topological Euler characteristics of $\Pic^d_g$. Our weight-zero calculations follow from a general result passing from the weight-graded Euler characteristics of $\M_{g, n}$ to those of $\Pic_{g,n}^d$.
\end{abstract}

\section{Introduction}

Fix integers $g \geq 1$ and $n,d \geq 0$ such that $(g, n) \neq (1, 0)$. Consider the universal Picard stack
\[\Pic^{d}_{g, n} \to \M_{g, n} \]
of degree-$d$ line bundles over the moduli space of smooth $n$-pointed curves of genus $g$. It is a smooth Deligne--Mumford stack of dimension $4g - 3 + n$ which is proper over $\M_{g, n}$. Let
\[\gr_k^WH^\star_c(\Pic_{g, n}^d, \QQ) :=  W_kH^\star_c(\Pic^{d}_{g, n}, \QQ)/W_{k-1}H^\star_c(\Pic^{d}_{g, n}, \QQ) \]
be the $k$th weight-graded piece of the compactly-supported cohomology. We are interested in the $\bbS_n$-equivariant weight-$k$ compactly-supported Euler characteristic
\[ \chi_k^{\bbS_n}(\Pic_{g, n}^d) = \sum_{i}(-1)^i \ch_n\left(\gr^W_kH^i_c(\Pic_{g, n}^d, \QQ)\right) \in\hat{\Lambda} , \]
where
\[\hat\Lambda := \QQ[\![p_1, p_2, \ldots]\!]\]
is the ring of degree-completed symmetric functions and $\ch_n(V)$ denotes the Frobenius characteristic of a finite-dimensional $\bbS_n$-representation $V$. The weight-$k$ Euler characteristic of $\Pic^d_{g, n}$ does not depend on $d$ \cite[Lemma 4.3]{VirtualHodge}, so we will write
\[\Pic_{g, n} := \Pic^0_{g, n} \]
for the universal Jacobian, and all of our calculations on $\Pic_{g, n}$ will apply to $\Pic_{g, n}^d$ as well. For any $k$, define generating functions
\begin{equation}\label{eqn:JandZdefn} \cat{J}_g^k := \sum_{n} \chi_k^{\bbS_n}(\Pic_{g, n}) \quad \mbox{and}\quad\cat{Z}_g^k := \sum_{n} \chi_k^{\bbS_n}(\M_{g, n});
\end{equation} the sums are taken over $n \geq 1$ if $g = 1$ and over $n \geq 0$ otherwise. Our main result Theorem \ref{thm:general_transform} passes from $\cat{Z}_g^k$ to $\cat{J}_g^k$. It implies a closed formula for $\cat{J}_g^0$ (Corollary \ref{cor:Jgformula}), and a similar technique yields a closed formula for topological Euler characteristics (Theorem \ref{thm:top_chi}). Sample calculations are included at the end of the paper in Tables \ref{table:wt0data} and \ref{table:Jgtop}. 

In order to state Theorem \ref{thm:general_transform}, we define a $\QQ$-algebra homomorphism $\calT: \hat{\Lambda} \to \hat{\Lambda}[\![x]\!]$ by
\begin{equation}\label{eqn:Transform_defn}
\calT: p_n \mapsto (p_n + x^n)/(1 - x^n).
\end{equation}
\begin{customthm}{1}\label{thm:general_transform}
For any $k$,
\[ \lim_{x \to 1}(1-x)\calT(\cat{Z}_g^k) = \sum_{i = 0}^{\floor{k/2}} \cat{J}_{g}^{k - 2i}. \]
The limit is interpreted as multiplying by $(1 -x)$ and then evaluating at $x= 1$.
\end{customthm}
As a consequence, we obtain an explicit formula for $\cat{J}_g^0$ as a simple transformation of the elegant expression for $\cat{Z}_{g}^0$ proved in \cite{CFGP} (there, the notation $z_g$ is used for $\cat{Z}_g^0$). We set $P_k := 1 + p_k \in \hat{\Lambda}$.
\begin{customcor}{2}\label{cor:Jgformula}
Suppose $g \geq 2$. The generating function $\cat{J}_g^0$ is obtained from the expression for $\cat{Z}_{g}^0$ in \cite[Theorem 1.1]{CFGP} by restricting the sum to those Laurent monomials in the $P_j$ whose exponents sum to $1$, and then making the substitution $P_j \mapsto P_j/j$ for all $j$. Specifically,
\[ \cat{J}_g^0 = \sum_{k, m, s, a, d} \frac{(-1)^{k}m^{k-1} (k-1)!}{P_m^k}  \prod_{i = 1}^{s} \frac{\mu(m/d_i)^{a_i} P_{d_i}^{a_i}}{d_i^{a_i}a_i!}. \]
where the sum is over integers $k, m, s > 0$ and $s$-tuples of positive integers $a = (a_1, \ldots, a_s)$ and $d = (d_1, \ldots, d_s)$ with $\gcd(d_1, \ldots, d_s) = 1$ and $d_i \mid m$ for each $i$, such that $0 < d_1 < \cdots < d_s < m$, $\sum_i a_i = k + 1$, and $\sum_i a_i d_i = km + 1 - g$.
\end{customcor}
 It is rather non-obvious to me \textit{a priori} that such a simple transformation should calculate $\cat{J}_g^0$. If we say that $P_j$ has degree $j$, Corollary \ref{cor:Jgformula} implies that $\cat{J}_{g}^0$ is a homogeneous Laurent polynomial in the $P_j$'s of degree $1 - g$, as is true for $\cat{Z}_g^0$. See Table \ref{table:wt0data} for some values of $\cat{J}_g^0$.

A similar phenomenon occurs for the \textit{topological} Euler characteristics. Write $\chi^{\bbS_n}_{top}$ for the topological Euler characteristic, considered as a virtual $\bbS_n$-representation in $\hat{\Lambda}$. Define 
\[\cat{J}_g^{top}:= \sum_{n} \chi^{\bbS_n}_{top}(\Pic_{g, n})\quad \mbox{and}\quad\cat{Z}_g^{top} := \sum_{n} \chi^{\bbS_n}_{top}(\M_{g, n}); \]
the sums are over the same $n$ as in (\ref{eqn:JandZdefn}).
When $g \geq 2$, a beautiful formula for $\cat{Z}_g^{top}$ was obtained by E. Gorsky \cite[Theorem 3.8]{Gorsky}. In his notation, $1 + p_j t^j$ corresponds to $P_j$. We find that another elementary combinatorial transformation of $\cat{Z}_g^{top}$ gives $\cat{J}_g^{top}$. In this case we focus on monomials of total exponent $2$.

\begin{customthm}{3}\label{thm:top_chi}
Suppose $g \geq 2$. The generating function $\cat{J}_g^{top}$ is obtained from Gorsky's formula \cite[Theorem 3.8]{Gorsky} for $\cat{Z}_g^{top}$ by restricting the sum to Laurent monomials in the $P_j$ whose exponents sum to $2$, and then making the substitution $P_j \mapsto P_j/j$ for all $j$. Explicitly,
    \[ \cat{J}_g^{top} = \sum_{r = 2}^{4g + 2} \sum_{s = 3}^{2g + 2} \sum_{\substack{{k_1 + \ldots + k_{r - 1} = s}\\ {\sum_i ik_i + r(2 - s)  = 2 - 2g}\\ {k_i \geq 0 \,\forall\, i}\\{i \mid r \text{ if }k_i > 0}\\{\mathrm{gcd}(i \mid k_i > 0) = 1}}}\left( (-1)^{s - 3}(s-3)! \frac{N(r;k_1, \ldots, k_{r-1})}{\prod_{i = 1}^{r - 1}i^{k_i}k_i!} \cdot r^{s - 3} \right) P_r^{2 - s} \prod_{i = 1}^{r - 1}P_{i}^{k_i}, \]
where
\[N(r; k_1, \ldots, k_{r - 1}) = \frac{1}{r} \sum_{d \mid r} \varphi(d) \prod_{i = 1}^{r-1} \left(\frac{\mu (d/\gcd(d, i)) \cdot\varphi(r/i)}{\varphi(d/\gcd(d, i))}\right)^{k_i}. \]
\end{customthm}
Just like $\cat{Z}_g^{top}$, the expression for $\cat{J}_{g}^{top}$ is a Laurent polynomial in the $P_j$'s which is homogeneous of degree $2 - 2g$. See Table \ref{table:Jgtop} for some values of $\cat{J}_g^{top}$. Analogous formulas for $\cat{J}_1^0$ and $\cat{J}_1^{top}$ can also be obtained, though we do not state them explicitly here. See \cite[Proposition 1.5]{CFGP} for the weight-zero case, and note that \cite[Equation (5.5)]{GetzlerMHM} may be specialized to the topological case.

From the definitions of the relevant generating functions, the preceding results give formulas for the weight-zero and topological Euler characteristics of $\Pic_g$.
\begin{customcor}{4}\label{cor:top_chi_pic}
For $g \geq 2$, closed formulas for $\chi_0(\Pic_g)$ and $\chi_{top}(\Pic_g)$ are obtained from $\cat{J}_g^0$ and $\cat{J}_g^{top}$ respectively by the substitution $P_j \to 1$ for all $j$.
\end{customcor}

\subsection{Remarks on the formulas}\label{rem:formula_shape} In both the weight-zero and topological cases, there is a straightforward geometric interpretation for restricting to Laurent monomials with fixed sums of exponents.

In Gorsky's formula for $\cat{Z}_g^{top}$, the coefficient of each Laurent monomial is the orbifold Euler characteristic of a moduli space of pairs $(C, \tau)$ where $C \in \M_g$ and $\tau \in \Aut(C)$ are chosen such that the branching of the cover $C \to C/\tau$ is fixed. Laurent monomials whose exponents sum to $2$ correspond to covers where $C/\tau$ has genus zero. In the formula for $\cat{Z}_g^0$ in \cite{CFGP}, the coefficient of each Laurent monomial is a certain weighted count of ``orbigraphs,'' which are graph-theoretic avatars of Gorsky's cyclic coverings; see \cite[\S 4]{CFGP}. The monomials whose exponents sum to $1$ are those where the corresponding orbigraphs are trees. That passing to the Picard stack has to do with restricting to covers of genus-zero objects in both the topological and weight-zero cases merits further exploration. I also do not have a geometric way of interpreting the substitution $P_j \to P_j/j$.

Note that for $n > 2g + 2$, the map $\Pic_{g, n}^d \to \M_{g, n}$ is an abelian fibration of varieties, which means $\chi_{top}(\Pic_{g, n}^d) = 0$ in this range. This property is manifest in Theorem \ref{thm:top_chi}, since the generating function  $\sum_n \chi_{top}(\Pic_{g, n}) t^n/n!$ for numerical Euler characteristics is obtained by sending $P_1 \to (1 + t)$ and $P_j \to 1$ for $j > 0$. One verifies that there are no negative powers of $P_1$ appearing in the formula and that the maximum power of $P_1$ which appears is $2g + 2$, which then implies the vanishing statement. However, this vanishing does not hold $\bbS_n$-equivariantly: the invariant $\chi_{top}^{\bbS_n}$ is still interesting even for $n > 2g + 2$. 

More surprising to me is that an analogous property holds for the numerical weight-zero Euler characteristic $\chi_0(\Pic_{g, n})$. Via a similar analysis of the monomials appearing in the formula, we find that it vanishes for $n > g + 1$, but not $\bbS_n$-equivariantly. It would be interesting to find an explanation for this property which comes from tropical geometry; I expect that it is related to tropical Jacobians of metric graphs \cite{AbreuPaciniSkeleton}.

\subsection{Techniques} 
Let $\calL$ be a Poincar\'e line bundle on $\Pic_{g, n}^m \times_{\M_{g, n}} \calC_{g, n}$, where
\[ \calC_{g, n} \to \M_{g, n} \]
is the universal curve. Let
\[\pi: \Pic_{g, n}^m \times_{\M_{g, n}} \calC_{g, n} \to \Pic_{g, n}^m \]
be the projection. When $m > 2g - 2$, Riemann--Roch implies that $\pi_*\calL$ is a vector bundle on $\Pic_{g, n}^m$. For such $m$, we define
\begin{equation}\label{defn:Sgn}
\calS_{g, n}^m := \P(\pi_*\calL).
\end{equation}
The projective bundle formula for Deligne--Mumford stacks \cite[Theorem 3.4]{motivesDM} then determines the rational cohomology of $\calS_{g, n}^m$ in terms of that of $\Pic_{g, n}^m$.
\begin{prop}\label{prop:sym_to_pic_weights}
    For $m \geq 2g - 1$ and for any $k$, we have
    \[ \gr^W_k H^j_c(\calS_{g, n}^{m},\QQ) \cong \bigoplus_{i = 0}^{m - g} \gr^W_{k - 2i}H_c^{j - 2i}(\Pic_{g, n}^m,\QQ)  \]
    for any $k \geq 0$. In particular,
    \[\chi_k^{\bbS_n}(\calS_{g, n}^m) = \sum_{i = 0}^{m - g} \chi_{k - 2i}^{\bbS_n}(\Pic_{g, n}). \]
\end{prop}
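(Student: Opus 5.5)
The plan is to apply the projective bundle formula to $p\colon \calS_{g,n}^m = \P(\pi_*\calL) \to \Pic_{g,n}^m$, track weights and the $\bbS_n$-action, and then use that $\chi_k^{\bbS_n}(\Pic^d_{g,n})$ does not depend on $d$ \cite[Lemma 4.3]{VirtualHodge}.

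First I will compute the rank of $\pi_*\calL$. For $m \geq 2g-1$ we have $m > 2g-2$, so $H^1$ of a degree-$m$ line bundle vanishes on every fiber. Cohomology and base change then shows $\pi_*\calL$ is locally free, and Riemann--Roch gives rank $r = m - g + 1$. Hence $p$ is a $\P^{m-g}$-bundle.

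Next I will invoke the projective bundle formula \cite[Theorem 3.4]{motivesDM}. It gives a decomposition in the category of Chow motives over $\Pic_{g,n}^m$, hence in compactly supported cohomology:
\[ H^j_c(\calS_{g,n}^m) \cong \bigoplus_{i=0}^{m-g} H^{j-2i}_c(\Pic_{g,n}^m)(-i), \]
where the map is $\alpha \mapsto \sum_i p^*\alpha_i \cup \xi^i$ and $\xi = c_1(\mathcal{O}(1))$. Even if the reference is phrased for ordinary cohomology, the formula holds for $H_c$: $p$ is proper, so $H_c(\calS) = H_c(\Pic, Rp_*\QQ)$, and the Leray–Hirsch splitting $Rp_*\QQ \cong \bigoplus_i \QQ[-2i](-i)$ is induced by cup product with $\xi^i$. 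Cup product with $\xi$ is a morphism of mixed Hodge structures of type $(1,1)$, so the Tate twist $(-i)$ raises weights by exactly $2i$. Taking $\gr^W_k$ then gives the stated isomorphism, because $\gr^W_k(V(-i)) = (\gr^W_{k-2i}V)(-i)$ and $\gr^W$ is exact on mixed Hodge structures.

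For equivariance, the symmetric group $\bbS_n$ acts on $\Pic^m_{g,n}$ and on the universal curve compatibly. The bundle $\pi_*\calL$ is only canonical up to twisting $\calL$ by a line bundle pulled back from $\Pic^m_{g,n}$, but this twist does not change $\P(\pi_*\calL)$. So $\calS^m_{g,n}$ carries a natural $\bbS_n$-action and $p$ is equivariant. The class $\xi$ is then $\bbS_n$-invariant: $\sigma^*\mathcal{O}(1) \cong \mathcal{O}(1)\otimes p^*M$ for some line bundle $M$. The invariance of $\xi$ is the step I expect to need the most care, since it depends on what the tautological twist does. If $\xi$ turns out not to be invariant, I would replace it by $\xi - p^*(\text{correction})$ in an averaged way; alternatively, I would note that the filtration by $p^*H\cdot\xi^{\le i}$ is $\bbS_n$-stable, and its associated graded is equivariantly $\bigoplus_i p^*H(-i)$. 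That graded statement suffices for Euler characteristics.

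Finally, I will take the alternating sum over $j$ and apply $\ch_n$. This gives $\chi_k^{\bbS_n}(\calS^m_{g,n}) = \sum_{i=0}^{m-g}\chi_{k-2i}^{\bbS_n}(\Pic^m_{g,n})$; the shift $j \mapsto j-2i$ preserves signs. Independence of $d$ then replaces $\Pic^m_{g,n}$ by $\Pic_{g,n}$.
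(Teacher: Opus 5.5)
Your proposal is correct and takes the same route as the paper. The paper just invokes the projective bundle formula for Deligne--Mumford stacks on the $\P^{m-g}$-bundle $\calS^m_{g,n}\to\Pic^m_{g,n}$ and then uses independence of $d$; your added details fill in what it leaves implicit: the Tate twists $(-i)$ shifting weights by $2i$, and $\bbS_n$-equivariance via the stable filtration by $p^*H\cdot\xi^{\le i}$ or an averaged hyperplane class, both valid since $\sigma^*\xi-\xi\in p^*H^2$.
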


For topological Euler characteristics, we have
\begin{equation}\label{eqn:sym_to_pic_top}
\chi^{\bbS_n}_{top}(\calS_{g, n}^m) = \chi^{\bbS_{n}}_{top}(\Pic_{g, n}) \cdot (m - g + 1)
\end{equation}
for $m \geq 2g - 1$.
By Proposition \ref{prop:sym_to_pic_weights} in the weight-graded case and (\ref{eqn:sym_to_pic_top}) in the topological case, we are led to study Euler characteristics of the spaces $\calS_{g, n}^m$. 
The coarse moduli space of $\calS_{g, n}^m$ is isomorphic to the coarse moduli space of the relative symmetric power $[\calC_{g, n}^m /\bbS_m]$, where $\calC_{g, n}^m$ is the $m$th fibered power of $\calC_{g, n}$ over $\M_{g, n}$. Since taking coarse spaces of Deligne--Mumford stacks does not affect rational cohomology, we will use the relative symmetric power for calculations.

To calculate with relative symmetric powers, we will start with a generating function for Euler characteristics of $\M_{g, n}$, and then make a series of transformations using symmetric function theory. Each step has an interpretation in terms of allowing or forbidding groups of marked points to coincide, or in terms of taking symmetric group quotients. When these transformations are composed, we are led to the algebra map $\calT$ of (\ref{eqn:Transform_defn}). The geometry behind these transformations was studied in \cite{KSY2024}, in the context of Hassett's moduli spaces of weighted pointed stable curves \cite{Hassett}.

% The basic geometry at hand is that of relative symmetric powers over $\M_{g}$. If \[\calC_{g, n} \to \calM_{g, n}\]
% is the universal curve, we set
% \[ \calC_{g, n}^{m} \to \M_{g, n} \]
% for the $m$th fibered power of $\calC_{g, n}$ over $\M_{g, n}$. We define $\calS_{g, n}^{m} :=  [\calC_{g, n}^{m} /\bbS_m]$, and obtain a natural morphism
% \begin{equation}\label{eqn:sym-to-pic}
%     \calS_{g, n}^{m} \to \Pic_{g, n}^m.
% \end{equation}
% On the level of coarse moduli spaces (\ref{eqn:sym-to-pic}) is a projective bundle of relative dimension $m - g$ once $m \geq 2g - 1$, by Riemann--Roch. A basic application of the projective bundle formula then yields the following proposition.
\subsection{Related work}
The calculation of topological, weight-graded, and orbifold Euler characteristics of moduli spaces of curves and their variants is a fundamental problem in algebraic geometry. See e.g. \cite{HarerZagier, BiniHarer, GetzlerKapranov, GetzlerMHM, GetzlerGenus2, PaganiTommasi, PW2, IntersectionHZ, genusonechar, BCK, EulerCharAbelian, genusoneDREuler} and the references therein. 

In \cite{VirtualHodge}, T. Song and I studied the weight-graded Euler characteristics characteristics of $\Pic_{g, n}$, towards understanding those of $\M_{g, n}(\P^r, d)$. In particular, Proposition E in loc. cit. determines the generating function for $\bbS_n$-equivariant Hodge--Deligne polynomials of $\Pic_{g, n}$ in terms of the corresponding generating function for $\M_{g, n}$. It is not clear how to extract the simple formulas in the present work directly from \cite[Proposition E]{VirtualHodge}, and the calculation herein is entirely self-contained. The geometric insight used here is the same as \cite{VirtualHodge}: it is the algebraic approach, particularly the definition and analysis of the operator $\calT$ of (\ref{eqn:Transform_defn}), that is different.

Recently, S. Wood has calculated the \textit{orbifold} Euler characteristics of relative compactified Jacobians over $\Mbar_{g, n}$ \cite{WoodOrbifold}. Her formula reduces to the Euler characteristic of moduli spaces of stable curves of genus zero, which fits into the general theme discussed in Remark \ref{rem:formula_shape}.

The generating function $\cat{Z}_g^{2}$ was computed by Payne--Willwacher \cite{PW2}. It is conceivable that the limit in Theorem \ref{thm:general_transform} can be evaluated to compute $\cat{J}_g^2$ as well, but we do not pursue this here. In their work and in \cite{CFGP}, a crucial input is an interpretation of the weight-graded pieces of the cohomology of $\M_{g, n}$ in terms of the cohomology of certain graph complexes, coming from Deligne's weight spectral sequence for the normal crossings compactification $\M_{g, n} \subset \Mbar_{g, n}$ as in \cite{GetzlerKapranov}. I only need graph complexes indirectly for my calculations, in the sense that Corollary \ref{cor:Jgformula} depends on the formula in \cite{CFGP}, which in turn depends on a graph complex defined in \cite{cgp}. T. Song and I are studying graph complexes computing the weight-graded pieces of the cohomology of $\Pic_{g, n}$ in ongoing work.

\subsection*{Acknowledgments}
I am indebted to Terry Song for many inspiring discussions. I also thank Dhruv Ranganathan and Terry Song for thoughtful comments on this article. I am supported by NSF DMS-2401850.

\section{Transforms of symmetric functions}
Let $\mathrm{Cl}_{\QQ}(\bbS_n)$ denote the vector space of $\QQ$-valued class functions on $\bbS_n$. There is a natural identification
\[\hat{\Lambda} \cong \prod_{n \geq 0} \mathrm{Cl}_\QQ(\bbS_n) \]
given by the \textit{Frobenius characteristic}. The Frobenius characteristic of an $\bbS_n$-representation $V$ is defined by
\[ \ch_n(V) = \frac{1}{n!}\sum_{\sigma \in \mathbb{S}_n} \mathrm{Tr}(\sigma|V) p_1^{k_1(\sigma)}\cdots p_n^{k_n(\sigma)}, \]
where $k_i(\sigma)$ denotes the number of $i$-cycles in $\sigma$. All of the symmetric function theory we use is developed in Macdonald's foundational text \cite{Macdonald}.

There are two basic specializations of symmetric functions that are worth highlighting. First, we have the rank homomorphism
\[ \mathrm{rk}: \hat{\Lambda} \to \QQ[\![x]\!] \]
given by $p_1 \mapsto x$ and $p_k \mapsto 0$ for $k > 1$. Under this specialization, we have
\[\mathrm{rk}(\ch_n(V)) = \dim(V) \frac{x^n}{n!}. \]
We may also define
\begin{equation}\label{eqn:inv_definition}
\mathrm{inv}: \hat{\Lambda} \to \QQ[\![x]\!]
\end{equation}
by $p_k \mapsto x^k$ for all $k > 0$. Under this map, we have
\begin{equation}\label{eqn:inv_property_rep}
    \mathrm{inv}(\ch_n(V)) = \dim \left(\mathrm{Hom}_{\bbS_n}(\triv_{\bbS_n}, V)\right)x^n.
\end{equation}
This property holds for virtual representations as well, so for any $\bbS_n$-variety $X$ and integer $k$ we have
\begin{equation}\label{eqn:inv_property_space}
    \mathrm{inv}(\chi_k^{\bbS_n}(X)) = \chi_k([X/\bbS_n])x^n.
\end{equation}
\subsection{Bisymmetric functions}
We will also make use of the ring of bisymmetric functions
\[ \hat{\Lambda} \otimes \hat{\Lambda} = \QQ[\![p_1, p_2, \ldots, q_1, q_2, \ldots  ]\!]\]
where $\otimes$ means the completed tensor product, and we have identified $p_i = p_i \otimes 1$ and $q_i = 1 \otimes p_i$.

There is a coproduct
\[ \Delta : \hat{\Lambda} \to \hat{\Lambda} \otimes \hat{\Lambda}, \]
determined by $\Delta(p_i) = p_i + q_i$ for all $i$. Using the natural identification
\[ \hat{\Lambda} \otimes \hat{\Lambda} = \prod_{m,n\geq 0} \mathrm{Cl}_{\QQ}(\bbS_m \times \bbS_n), \]
we have
\[    \Delta(\ch_n(V)) = \sum_{k = 0}^n \ch_{k, n-k}\left(\Res_{\bbS_k \times \bbS_{n - k}}^{\bbS_n }V \right),
\]
where $\ch_{m,n}$ denotes the Frobenius characteristic of an $\bbS_m \times \bbS_n$-representation, which is an element of $\hat{\Lambda} \otimes \hat{\Lambda}$ defined analogously to the case of $\bbS_n$-representations. If $X$ is an $\bbS_n$-variety, we obtain
\begin{equation}\label{eqn:coproduct_chi}
\Delta \chi_k^{\bbS_n}(X) = \sum_{m = 0}^{n} \chi_k^{\bbS_{m} \times \bbS_{n-m}}(X).
\end{equation}
We will also make use of the bisymmetric versions of the map $\mathrm{inv}$. Now there are two maps
\[ \mathrm{inv}_1, \mathrm{inv}_2 : \hat{\Lambda} \otimes \hat{\Lambda} \to \hat{\Lambda}[\![x]\!], \]
determined by
\[\mathrm{inv}_1: p_i \mapsto x^i;\, q_i \mapsto p_i \]
and
\[\mathrm{inv}_2: p_i \mapsto p_i;\, q_i \mapsto x^i. \]
If $X$ has an $\bbS_m \times \bbS_n$-action, the analogues of (\ref{eqn:inv_property_space}) are straightforward:
\begin{equation}\label{eqn:bisym_inv_space_prop}
\mathrm{inv}_1(\chi_k^{\bbS_m \times \bbS_n}(X) ) = \chi_k^{\bbS_n}([X/\bbS_m]) x^m\quad \mbox{and} \quad\mathrm{inv}_2(\chi_k^{\bbS_m \times \bbS_n}(X) ) = \chi_k^{\bbS_m}([X/\bbS_n]) x^n. 
\end{equation}
The following lemma is straightforward from the definition of $\Delta$.
\begin{lem}\label{lem:inv_independence}
    For any $f \in \hat{\Lambda}$,
    \[ \mathrm{inv}_1(\Delta(f)) = \mathrm{inv}_2(\Delta(f)). \]
\end{lem}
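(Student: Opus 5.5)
The identity only involves the specializations $\mathrm{inv}_1$, $\mathrm{inv}_2$ and the coproduct $\Delta$, all of which are $\QQ$-algebra homomorphisms. So the plan is to reduce to a check on generators, namely the power sums $p_i$, and then extend to all of $\hat\Lambda$ by continuity.

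First I record that $\Delta$, $\mathrm{inv}_1$ and $\mathrm{inv}_2$ are continuous for the degree filtration. $\Delta$ preserves degree, while $\mathrm{inv}_1$ and $\mathrm{inv}_2$ send a bihomogeneous element of bidegree $(a,b)$ to something of total degree $a+b$ in $\hat\Lambda[\![x]\!]$, with $x$ given degree $1$. Hence both composites $\mathrm{inv}_1\circ\Delta$ and $\mathrm{inv}_2\circ\Delta$ are continuous $\QQ$-algebra homomorphisms $\hat\Lambda \to \hat\Lambda[\![x]\!]$. Since $\hat\Lambda = \QQ[\![p_1,p_2,\ldots]\!]$, polynomials in the $p_i$ are dense in it. Two such homomorphisms therefore agree everywhere as soon as they agree on each $p_i$: they then agree on all polynomials, and by continuity on every degree-truncation, hence on all of $\hat\Lambda$.

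It remains to check the generators. Using $\Delta(p_i) = p_i + q_i$, we get
\[
\mathrm{inv}_1(\Delta(p_i)) = x^i + p_i \quad\text{and}\quad \mathrm{inv}_2(\Delta(p_i)) = p_i + x^i,
\]
which are equal. This proves the lemma.

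There is no real obstacle here. The only point needing care is the passage from polynomials to power series, and this works because the maps are graded, so each homogeneous component of the output depends on only finitely many homogeneous components of the input. Equivalently, the common value is the continuous algebra map $p_i \mapsto p_i + x^i$, which is symmetric in the two tensor factors.
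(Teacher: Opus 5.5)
Your proof is correct and is the argument the paper has in mind. The paper only remarks that the lemma is straightforward from the definition of $\Delta$. Your observation that both composites are the continuous algebra map $p_i \mapsto p_i + x^i$ makes this explicit, and the paper later uses exactly this identity, $\mathrm{inv}\Delta(p_j) = p_j + x^j$.
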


\begin{defn}\label{defn:invDelta}
In view of Lemma \ref{lem:inv_independence}, we define
\[ \mathrm{inv}\Delta : \hat{\Lambda }\to \hat{\Lambda}[\![ x ]\!] \]
by
\[ \mathrm{inv}\Delta(f) = \mathrm{inv}_1(\Delta(f)) = \mathrm{inv_2}(\Delta(f)). \]
\end{defn}
For any $\bbS_n$-variety $X$, we obtain
\begin{equation}\label{eqn:invDelta_space}
    \mathrm{inv}\Delta(\chi_k^{\bbS_n}(X)) = \sum_{m = 0}^{n} \chi_k^{\bbS_{n - m}}([X/\bbS_m])x^m.
\end{equation}
\subsection{Plethysm}
Let
\[ \Lambda := \QQ[p_1, p_2, \ldots] \]
be the ordinary ring of symmetric functions. Recall that \textit{plethysm} is the associative operation
\[ \Lambda \times \Lambda \to \Lambda, \]
denoted by $(f, g) \mapsto f \circ g$, characterized uniquely by the following properties:
\begin{enumerate}
    \item for all $g \in \Lambda$, the map $\Lambda \to \Lambda$ by
    \[ f \mapsto f \circ g \]
    is a $\QQ$-algebra homomorphism;
    \item for all $k > 0$, the map $\Lambda \to \Lambda$ by
    \[f \mapsto p_k \circ f \]
    is a $\QQ$-algebra homomorphism;
    \item for any $f  = f(p_1, p_2, \ldots) \in \Lambda$ and $k > 0$, we have
    \[ p_k \circ f = f \circ p_k = f(p_{k}, p_{2k}, \ldots). \]
\end{enumerate}
Plethysm extends to \[\hat{\Lambda} \times \hat{\Lambda}_+ \to \hat{\Lambda} \] where $ \hat{\Lambda}_+ \subset \hat{\Lambda}$ is the subspace of symmetric functions with vanishing degree-$0$ term.
\begin{defn}
    For $f \in \hat{\Lambda}_+$ we define the plethystic exponential of $f$ by
    \[ \Exp(f) := \sum_{n > 0} h_n \circ f, \]
    where
    \[h_n := \ch_n(\triv_{\bbS_n}) \in \hat{\Lambda}.\]
\end{defn}
This version of $\Exp$ differs from the convention in \cite[(8.4)]{GetzlerKapranov}, where the sum includes $h_0 = 1$. However, it is consistent with the convention in \cite[\S 3]{GetzlerPandharipande}. With this convention, we obtain the following lemma from \cite[Proposition 8.6]{GetzlerKapranov}.
\begin{lem}
    The operation $\Exp(f)$ is invertible under plethysm, with inverse \[\Log(1 + f):= \sum_{n \geq 1} \frac{\mu(n)}{n}\log(1 + p_n \circ f).\]
    That is,
    \[ \Exp(\Log(1 + f)) = \Log(1 + \Exp(f)) = f, \]
    for any $f \in \hat{\Lambda}_+$.
\end{lem}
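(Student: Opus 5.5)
The plan is to reduce both identities to the classical generating function for the complete homogeneous symmetric functions, together with Möbius inversion. I will not appeal to \cite{GetzlerKapranov} directly.

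First I record the standard identity
\[ \sum_{n \geq 0} h_n = \exp\Big(\sum_{k \geq 1} \frac{p_k}{k}\Big) \quad\text{in } \hat{\Lambda}, \]
which comes from the cycle-index expansion of $h_n = \ch_n(\triv_{\bbS_n})$ \cite{Macdonald}. Next I plethystically substitute $f \in \hat{\Lambda}_+$. Property (1), extended by continuity to $\hat{\Lambda} \times \hat{\Lambda}_+$, makes $F \mapsto F \circ f$ a continuous algebra homomorphism, so it commutes with $\exp$. Property (3) gives $p_k \circ f$ on each term. Together these yield
\[ 1 + \Exp(f) = \exp\Big(\sum_{k \geq 1} \frac{p_k \circ f}{k}\Big), \qquad\text{equivalently}\qquad \log(1 + \Exp(f)) = \Psi(f), \]
where $\Psi(g) := \sum_{k \geq 1} \frac{1}{k}\, p_k \circ g$. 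All series converge in the degree-completed topology because $f$ has no constant term.

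Two elementary facts are then needed. The first is that each operator $g \mapsto p_k \circ g$ is a continuous $\QQ$-algebra homomorphism of $\hat{\Lambda}$ (property (2)), so it is additive and satisfies $p_k \circ \log(1+g) = \log(1 + p_k \circ g)$ for $g \in \hat{\Lambda}_+$. The second is that $p_n \circ (p_k \circ g) = p_{nk} \circ g$, which follows from associativity of plethysm and $p_n \circ p_k = p_{nk}$. With these, define $\Phi(g) := \sum_{n \geq 1} \frac{\mu(n)}{n}\, p_n \circ g$, so that by the first fact $\Log(1+f) = \Phi(\log(1+f))$. Using $\sum_{n \mid m} \mu(n) = \delta_{m,1}$, one checks that $\Phi$ and $\Psi$ are mutually inverse linear operators:
\[ \Phi(\Psi(g)) = \sum_{n, k} \frac{\mu(n)}{nk}\, p_{nk} \circ g = \sum_{m \geq 1} \frac{1}{m} \Big(\sum_{n \mid m}\mu(n)\Big) p_m \circ g = g, \]
and symmetrically $\Psi(\Phi(g)) = g$.

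The two identities now follow. For the first, $\Log(1 + \Exp(f)) = \Phi(\log(1+\Exp(f))) = \Phi(\Psi(f)) = f$. For the second, set $L := \Log(1+f) = \Phi(\log(1+f)) \in \hat{\Lambda}_+$. Then
\[ 1 + \Exp(L) = \exp(\Psi(L)) = \exp\big(\Psi\Phi(\log(1+f))\big) = 1 + f. \]
The only step requiring real care is justifying the rearrangements of the double sums, and the commutation of $p_k \circ (-)$ with the infinite series $\log$ and $\exp$, in the completed ring. Both are handled by degree considerations: $p_k \circ g$ has minimal degree at least $k$ times that of $g$, so in each fixed degree only finitely many terms contribute.
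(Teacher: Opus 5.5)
Your argument is correct, but it does not follow the paper, which gives no proof of its own: it imports \cite[Proposition 8.6]{GetzlerKapranov} and only adjusts for the fact that its $\Exp$ omits the term $h_0 = 1$. Under that adjustment the Getzler--Kapranov exponential is $1 + \Exp(f)$ here, which is why the inverse is written $\Log(1+\cdot)$. You instead reprove the result from scratch. From $\sum_{n\ge 0} h_n = \exp(\sum_k p_k/k)$ you get $1+\Exp(f) = \exp(\Psi(f))$, and you then invert $\Psi$ by M\"obius inversion using $p_n\circ(p_k\circ g) = p_{nk}\circ g$. This is the standard argument behind the cited proposition.

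The citation is shorter, but it relies on the reader reconciling the two conventions. Your route makes the lemma self-contained and makes the $h_0$ convention visible: it is exactly the $1+$ inside $\log(1+\Exp(f))$. It also uses only the generating-function identity the paper already invokes in the proof of Lemma~\ref{lem:transf_formula}. Indeed, the formula $\Exp(p_k) = \exp(\sum_n p_{kn}/n) - 1$ used there is the case $f = p_k$ of your first display.

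One point you should state explicitly is that $\Phi$ and $\Psi$ are defined only on $\hat{\Lambda}_+$. For $g$ with nonzero constant term, the series $\sum_k \frac{1}{k}\,p_k\circ g$ diverges, and your degree bound for the rearrangements also needs $g \in \hat{\Lambda}_+$. All the elements you apply them to ($f$, $\Psi(f)$, $\log(1+f)$, $L$) do lie in $\hat{\Lambda}_+$, so this is a matter of stating it rather than a gap.
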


We note that plethysm extends to $\hat{\Lambda}[\![x ]\!]$: it defines a map
\[ \hat{\Lambda}[\![x ]\!] \times F_1 \hat{\Lambda }[\![x]\!] \to \hat{\Lambda}[\![ x ]\!], \]
where $F_1 \hat{\Lambda }[\![x]\!] \subset \hat{\Lambda }[\![x]\!]$ is the subspace defined by elements with vanishing bidegree-$(0, 0)$ term, by defining $x \circ f = x$ for any $f$ and $p_n \circ x = x^n$ for all $n > 0$.

\section{Proofs of the main theorems}
For $g \geq 2$ and $n > 0$ define $\calC_{g}^n \to \M_{g}$ be the $n$th fibered power of the universal curve over $\M_{g}$, and formally set $\calC_g^0 = \M_g$. The stack $\calC_{g}^n$ parameterizes smooth curves of genus $g$ with $n$ ordered, not-necessarily-distinct marked points, and is naturally identified with the stack $\M_{g, \ve^n}$ of smooth curves in Hassett's compactification $\Mbar_{g, \ve^n}$ of $\M_{g, n}$, for any $\ve \in \QQ_{> 0}$ with $\ve < 1/n$; this defines $\calC_g^n$ for $g = 1$ and $n > 0$ as well. We thus obtain the following lemma from \cite[Theorem A]{KSY2024}.
\begin{lem}
    For any $k$, we have
    \[ \cat{Z}_g^{k} \circ \Exp(p_1) = \sum_{n} \chi_k^{\bbS_n}(\calC_{g}^n). \]
    where the sum is over those $n$ as in (\ref{eqn:JandZdefn}).
\end{lem}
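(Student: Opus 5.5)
The plan is to stratify $\calC_g^n$ according to which marked points coincide, identify each stratum with an open moduli space $\M_{g,j}$, and then recognize the resulting $\bbS_n$-equivariant sum as a plethysm with $\Exp(p_1)$. This is the mechanism behind \cite[Theorem A]{KSY2024}, which the paper cites, specialized to the open locus of smooth curves.

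\textbf{Stratification.} For a set partition $\pi$ of $\{1,\ldots,n\}$ into $j$ blocks, let $\calC_g^\pi \subset \calC_g^n$ be the locally closed substack where $x_a = x_b$ exactly when $a$ and $b$ lie in the same block of $\pi$. Each stratum is isomorphic to $\M_{g,j}$: the points indexed by the blocks are distinct, so we can send a block to its common point.
\begin{itemize}
\item These strata form a finite stratification of $\calC_g^n$ by locally closed substacks.
\item $\bbS_n$ permutes the strata.
\item For $g=1$ every stratum has $j \geq 1$ blocks, so $\M_{1,0}$ never appears. This is compatible with summing $\cat{Z}_1^k$ only over $n \geq 1$ and with $\Exp(p_1)$ having no constant term.
\end{itemize}

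\textbf{Additivity of $\chi_k^{\bbS_n}$.} Group the strata by the number of blocks $j$. The union $\calC_g^{n,(j)}$ of strata with $j$ blocks is $\bbS_n$-stable and locally closed, and closures only add strata with fewer blocks. The long exact sequence in compactly supported cohomology for an open–closed decomposition is $\bbS_n$-equivariant and strictly compatible with weights, since morphisms of mixed Hodge structures are strict. Hence $\chi_k^{\bbS_n}$ is additive, and
\[
\chi_k^{\bbS_n}(\calC_g^n) \;=\; \sum_j \chi_k^{\bbS_n}\bigl(\calC_g^{n,(j)}\bigr).
\]

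\textbf{Identifying the $j$-block piece.} An ordered partition of $[n]$ into $j$ labeled nonempty blocks, together with a point of $\M_{g,j}$, determines a point of $\calC_g^{n,(j)}$. Two such data give the same point exactly when they differ by the action of $\bbS_j$ relabeling the blocks. Therefore
\[
\calC_g^{n,(j)} \;\cong\; \Bigl(\coprod_{\text{ordered } (B_1,\ldots,B_j)} \M_{g,j}\Bigr)\Big/\bbS_j ,
\]
and this isomorphism is $\bbS_n$-equivariant, with $\bbS_j$ acting diagonally on the labels and on $\M_{g,j}$.

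\textbf{Passage to plethysm.} This is the standard species-composition formula of Joyal and Getzler--Kapranov. For a (virtual) $\bbS_j$-representation $V$, the $\bbS_n$-character of
\[
\Bigl(\bigoplus_{\text{ordered partitions into } j \text{ blocks}} V\Bigr)_{\bbS_j}
\]
is the degree-$n$ part of $\ch_j(V) \circ \Exp(p_1)$. Here $\Exp(p_1) = \sum_{m \geq 1} h_m$ is the characteristic of the species of nonempty finite sets, whose $\bbS_m$-representation is trivial. Apply this with $V = \gr^W_k H^\star_c(\M_{g,j})$. Taking $\bbS_j$-coinvariants commutes with $\gr^W_k H_c$ because we work with $\QQ$-coefficients and $\bbS_j$ acts freely on the index set of components. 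Summing over $j$ and $n$ then gives
\[
\cat{Z}_g^k \circ \Exp(p_1) \;=\; \sum_n \chi_k^{\bbS_n}(\calC_g^n).
\]

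\textbf{Main obstacle.} I expect the one delicate step to be this equivariant bookkeeping. One must check that the character of $\bbS_n$ on the induced/coinvariant construction is exactly the plethysm. Concretely, I would compute the trace of $\sigma \in \bbS_n$ by summing over the block labelings fixed by $\sigma$ up to $\bbS_j$. I would then match this against the defining properties of plethysm, namely $p_r \circ \Exp(p_1) = \Exp(p_r)$, cycle by cycle. It suffices to check this on power-sum monomials $\ch_j(V) = p_\lambda$.
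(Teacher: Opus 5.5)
Your proof is correct, but it takes a more self-contained route than the paper. The paper does no stratification itself. It quotes the bisymmetric identity \eqref{eqn:KSYthm} from \cite[Theorem A]{KSY2024} for the Hassett spaces $\M_{g,m|n}$ with $m$ heavy and $n$ light points. It then applies the projection $p_i \mapsto 0$, $q_i \mapsto p_i$ to both sides. On the left this keeps only the $m=0$ terms $\M_{g,0|n}=\calC_g^n$, and on the right it turns $\Delta(\cat{Z}_g^k)\circ_2\Exp(q_1)$ into $\cat{Z}_g^k\circ\Exp(p_1)$.

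You instead prove this $m=0$ case directly. You stratify $\calC_g^n$ by coincidence pattern and use additivity of $\chi_k^{\bbS_n}$, which follows from exactness of $\gr^W_k$. You then recognize the union of $j$-block strata as the species composite of $j \mapsto H^\star_c(\M_{g,j})$ with the species of nonempty sets, whose characteristic is $\ch_j(\cdot)\circ\Exp(p_1)$. This is essentially the mechanism behind the cited theorem, without heavy points or bisymmetric functions.

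What each route buys:
\begin{itemize}
\item The paper's route is a two-line specialization of a published result.
\item Your route needs only the species composition formula and strictness of weights.
\item It also makes literal the slogan that $\circ\Exp(p_1)$ lets marked points coincide.
\end{itemize}
The same argument gives more. Stratify $[\calC_g^{m+n}/\bbS_m]$ by coincidences among the $n$ ordered points; the strata are $[\calC_{g,j}^m/\bbS_m]$, and $x$ is inert under plethysm. This also proves Lemma \ref{lem:sym_to_light}, which the paper again handles by citation.
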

\begin{proof}
    Set
    \[\M_{g, m|n} := \M_{g, (1^m, \ve^n)} \subset \Mbar_{g, (1^m, \ve^n)} \]
    for the locus of smooth curves in the Hassett moduli space $\Mbar_{g, (1^m, \ve^n)}$. The precise definition of this moduli space is not important here, though it can be found in \cite{Hassett}. The important point is that when $m = 0$ we recover $\calC_g^n$.
    
    The weight-$k$ part of the formula for $\cat{a}_g$ in \cite[Theorem A]{KSY2024} reads
    \begin{equation}\label{eqn:KSYthm}
    \sum_{m,n} \chi_k^{\bbS_m \times \bbS_n}(\calM_{g, m|n}) = \Delta (\cat{Z}_g^k) \circ_2 \Exp(q_1),
    \end{equation}
    where the sum on the left-hand-side is over all $m, n \geq 0$ and must satisfy the additional constraint that $m + n>0$ if $g = 1$. We have again set
    \[ \hat{\Lambda} \otimes \hat{\Lambda} = \QQ[\![p_1, p_2, \ldots, q_1, q_2, \ldots  ]\!],\]
    and written $\circ_2$ for plethysm in the second set of variables as discussed in \S 2 of loc. cit. Now take the part of this formula which is of degree $0$ in the first set of variables. Formally, we are taking the projection
    \[ \hat{\Lambda} \otimes \hat{\Lambda} \to \QQ\otimes \hat{\Lambda} = \hat{\Lambda} \]
    which sends $p_i \to 0$ and $q_i \to p_i$ for all $i > 0$ on both sides of (\ref{eqn:KSYthm}). Using $\M_{g, 0|n} = \calC_g^n$, we obtain
    \[ \sum_{n} \chi_k^{\bbS_n}(\calC_g^n) = \cat{Z}_g^k \circ \Exp(p_1), \]
    as desired.    
\end{proof}
From (\ref{eqn:invDelta_space}) we obtain the following corollary. All sums over $m, n$ in this section are taken with the same convention as in (\ref{eqn:KSYthm}).
\begin{cor}\label{cor:light_univ_formula}
    For any $k$, we have
    \[ \mathrm{inv}\Delta(\cat{Z}_g^{k} \circ \Exp(p_1)) = \sum_{m,n} \chi_k^{\bbS_n}([\calC_{g}^{m + n}/\bbS_m])x^m. \]
\end{cor}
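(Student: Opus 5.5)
The corollary follows by applying $\mathrm{inv}\Delta$ to the identity of the preceding lemma and then using (\ref{eqn:invDelta_space}) on each summand.

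First, apply $\mathrm{inv}\Delta$ to both sides of
\[ \cat{Z}_g^{k} \circ \Exp(p_1) = \sum_{N} \chi_k^{\bbS_N}(\calC_{g}^N). \]
Since $\Delta$, $\mathrm{inv}_1$ and $\mathrm{inv}_2$ are $\QQ$-linear and continuous for the degree topology, $\mathrm{inv}\Delta$ commutes with the infinite sum. We therefore get
\[ \mathrm{inv}\Delta\bigl(\cat{Z}_g^{k} \circ \Exp(p_1)\bigr) = \sum_N \mathrm{inv}\Delta\bigl(\chi_k^{\bbS_N}(\calC_g^N)\bigr). \]
The term-by-term application is legitimate because each $\chi_k^{\bbS_N}(\calC_g^N)$ is homogeneous of degree $N$, so only finitely many terms contribute to any fixed bidegree in $(p, x)$.

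Second, apply (\ref{eqn:invDelta_space}) with $X = \calC_g^N$ and its $\bbS_N$-action permuting the factors. This gives
\[ \mathrm{inv}\Delta\bigl(\chi_k^{\bbS_N}(\calC_g^N)\bigr) = \sum_{m=0}^N \chi_k^{\bbS_{N-m}}\bigl([\calC_g^N/\bbS_m]\bigr)x^m, \]
where $\bbS_m \times \bbS_{N-m} \subset \bbS_N$ is the Young subgroup. The identity (\ref{eqn:invDelta_space}) is itself obtained from (\ref{eqn:coproduct_chi}) together with (\ref{eqn:bisym_inv_space_prop}).

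Third, reindex with $n = N - m$, writing $\calC_g^{N} = \calC_g^{m+n}$. The double sum becomes
\[ \sum_{m,n}\chi_k^{\bbS_n}\bigl([\calC_g^{m+n}/\bbS_m]\bigr)x^m, \]
which is the claimed right-hand side.

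The only point needing care is the range of summation when $g=1$. There $N \geq 1$, and this is exactly the condition $m+n>0$ in the convention of (\ref{eqn:KSYthm}). For $g \geq 2$ all $m,n \geq 0$ occur, including $N=0$, where $\calC_g^0=\M_g$ is consistent with (\ref{eqn:invDelta_space}) applied trivially. No other step is an obstacle.
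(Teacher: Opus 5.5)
Your proposal is correct and is the same argument the paper uses: the paper derives this corollary in one line by applying $\mathrm{inv}\Delta$ to the preceding lemma and invoking (\ref{eqn:invDelta_space}) term by term. Your continuity remark, the reindexing $n = N - m$, and the check of the $g=1$ summation range fill in details the paper leaves implicit.
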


Recall that the projective bundle $\calS_{g, n}^m$ of (\ref{defn:Sgn}) is defined for $m \geq 2g - 1$, and its rational cohomology coincides with that of the stack quotient $[\calC_{g, n}^m / \bbS_m]$. We formally set
\[ \calS_{g, n}^m = [\calC_{g, n}^m/\bbS_m
] \]
for $0 \leq m \leq 2g - 2$.
The next lemma follows from the same reasoning as \cite[Proposition 4.3]{KSY2024}, or \cite[Theorem 3.2]{GetzlerPandharipande}: the operation $\circ \Exp(p_1)$ allows marked points to coincide.
\begin{lem}\label{lem:sym_to_light}
With the convention that $\calS_{g, n}^m = [\calC_{g, n}^m/\bbS_m]$ for $m \leq 2g - 2$,
    \[\left(\sum_{m, n} \chi_k^{\bbS_n}(\calS_{g, n}^m)x^m\right) \circ \Exp(p_1) =  \sum_{m,n} \chi_k^{\bbS_n}([\calC_{g}^{m + n}/\bbS_m])x^m.\]
\end{lem}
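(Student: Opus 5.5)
The plan is to prove the identity one coefficient of $x^m$ at a time, using a stratification of $\calC_g^{m+n}$ by coincidences among the last $n$ points, and then to pass to the $\bbS_m$-quotient. Plethysm with $\Exp(p_1)$ acts only on the $p$-variables and fixes $x$ (by the convention $x\circ f = x$). So it suffices to show, for each fixed $m$,
\[ \Big(\sum_{n} \chi_k^{\bbS_n}([\calC^m_{g,n}/\bbS_m])\Big)\circ \Exp(p_1) = \sum_n \chi_k^{\bbS_n}([\calC_g^{m+n}/\bbS_m]). \]
For $m \geq 2g-1$, the stack $\calS^m_{g,n}$ and $[\calC^m_{g,n}/\bbS_m]$ have isomorphic coarse spaces, as noted earlier, so they have the same rational cohomology together with its weight filtration and $\bbS_n$-action. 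This lets me replace $\calS^m_{g,n}$ by $[\calC^m_{g,n}/\bbS_m]$ for every $m$. Here $\calC^m_{g,n}$ is the $m$-th fibered power of the universal curve over $\M_{g,n}$.

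\textbf{Step 1 (stratification before quotienting).} I work $\bbS_m\times\bbS_n$-equivariantly on $\calC_g^{m+n}$. I view a point as a curve $C$ with $m$ ``light'' points $y_1,\dots,y_m$ and $n$ ``heavy-labelled'' points $z_1,\dots,z_n$, all allowed to coincide arbitrarily. I stratify by the set partition $\pi$ of $[n]$ recording which $z_a$ coincide; the $y_b$ are left unrestricted. The stratum for $\pi$ with $j$ blocks is isomorphic to $\calC^m_{g,j}$: choosing an ordering of the blocks gives $j$ distinct marked points, and the $m$ free points still range over the fibered power. The strata are locally closed and $\bbS_m\times\bbS_n$ permutes them. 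The union of strata with $j$ blocks is, $\bbS_n$-equivariantly, obtained from the $\bbS_j$-space $\calC^m_{g,j}$ by the standard set-partition induction. By additivity of weight-graded compactly supported Euler characteristics over such stratifications, this gives the identity
\[ \sum_n \chi_k^{\bbS_m\times\bbS_n}(\calC_g^{m+n}) = \Big(\sum_j \chi_k^{\bbS_m\times\bbS_j}(\calC^m_{g,j})\Big)\circ_2 \Exp(q_1), \]
where $\circ_2$ denotes plethysm in the second set of variables. This is exactly the combinatorics of \cite[Theorem 3.2]{GetzlerPandharipande} and \cite[Proposition 4.3]{KSY2024}, run with an extra passive $\bbS_m$-factor; those results identify the Frobenius characteristic of the set-partition induction with plethysm by $\Exp(p_1)$ in the relevant variables.

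\textbf{Step 2 (quotient by $\bbS_m$).} I apply $\mathrm{inv}_1$ to both sides. By (\ref{eqn:bisym_inv_space_prop}), $\mathrm{inv}_1$ sends $\chi_k^{\bbS_m\times\bbS_n}(X)$ to $\chi_k^{\bbS_n}([X/\bbS_m])\,x^m$. Since $\mathrm{inv}_1$ only touches the first set of variables, it commutes with $\circ_2\Exp(q_1)$; after renaming $q_i\mapsto p_i$, this yields the claimed identity. Compatibility of the quotient with the stratification is automatic, because taking $\bbS_m$-invariants is exact on rational mixed Hodge structures. Finally, for $g=1$ I restrict the index sets as in (\ref{eqn:KSYthm}), which is consistent because all strata have $j\geq1$ whenever $n\geq1$.

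\textbf{Main obstacle.} The only genuinely nontrivial point is Step 1: checking that the union of coincidence strata with $j$ blocks has equivariant weight-$k$ Euler characteristic given by $h$-plethysm, that is, by $\circ\Exp(p_1)$, in the $n$-variables. My approach is to copy the proof of \cite[Theorem 3.2]{GetzlerPandharipande} verbatim, noting that the $\bbS_m$-action and the $m$ free points are carried along passively. The identification of each stratum with $\calC^m_{g,j}$, equivariantly for the stabilizer of $\pi$, then does the rest.
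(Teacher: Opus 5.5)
Your proposal is correct and is essentially the paper's argument. The paper simply invokes the coincidence-stratification reasoning of \cite[Theorem 3.2]{GetzlerPandharipande} and \cite[Proposition 4.3]{KSY2024} (``$\circ\Exp(p_1)$ allows marked points to coincide''), and you spell this out: you stratify $\calC_g^{m+n}$ $\bbS_m\times\bbS_n$-equivariantly by coincidences among the last $n$ points, then pass to the $\bbS_m$-quotient via $\mathrm{inv}_1$, which indeed intertwines $\circ_2\Exp(q_1)$ with $\circ\Exp(p_1)$ since both sides are ring maps agreeing on generators.
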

Since $\Log(1 + p_1)$ is the inverse of $\Exp(p_1)$ under plethysm, we obtain the following result by combining Corollary \ref{cor:light_univ_formula} and Lemma \ref{lem:sym_to_light}.
\begin{cor}\label{cor:Sym_formula}
With the convention that $\calS_{g, n}^m = [\calC_{g, n}^m/\bbS_m]$ for $m \leq 2g - 2$,
    \[\sum_{m,n} \chi^{\bbS_n}_k(\calS_{g, n}^m)x^m = \mathrm{inv}\Delta(\cat{Z}_g^{k} \circ \Exp(p_1)) \circ \Log(1 + p_1). \]
\end{cor}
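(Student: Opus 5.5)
The plan is to take the identity of Lemma \ref{lem:sym_to_light}, rewrite its right-hand side using Corollary \ref{cor:light_univ_formula}, and then undo the operation $\circ\,\Exp(p_1)$ by applying $\circ\,\Log(1+p_1)$ on the right of both sides. Write
\[F := \sum_{m,n} \chi_k^{\bbS_n}(\calS_{g,n}^m)\,x^m \in \hat{\Lambda}[\![x]\!].\]
Combining the two earlier results gives
\[F \circ \Exp(p_1) = \mathrm{inv}\Delta\bigl(\cat{Z}_g^k \circ \Exp(p_1)\bigr).\]
It then suffices to show that $(F\circ \Exp(p_1))\circ \Log(1+p_1) = F$.

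First I check that every plethysm involved is defined in the extended sense of \S 2. Both $\Exp(p_1)$ and $\Log(1+p_1) = \sum_{n\ge1}\frac{\mu(n)}{n}\log(1+p_n)$ have vanishing bidegree-$(0,0)$ term, so they lie in $F_1\hat{\Lambda}[\![x]\!]$. The same holds for their composites.

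Next I need associativity of plethysm on $\hat{\Lambda}[\![x]\!]$ together with the convention $x\circ f = x$. Associativity for $\Lambda$ is standard. It passes to the completion and to the extension by $x$ because both sides of $(F\circ a)\circ b = F\circ (a\circ b)$ are continuous $\QQ$-algebra maps in $F$. So it suffices to check them on the generators $p_k$ and $x$:
\begin{itemize}
\item On $x$, both sides equal $x$.
\item On $p_k$, both sides equal $p_k\circ a\circ b$, using property (2) and associativity in $\Lambda$.
\end{itemize}
In particular, the power of $x$ records $m$ and is untouched by composition on the right.

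Then associativity gives
\[(F\circ \Exp(p_1))\circ\Log(1+p_1) = F\circ\bigl(\Exp(p_1)\circ \Log(1+p_1)\bigr).\]
By definition $\Exp(f) = \sum_n h_n\circ f = \Exp(p_1)\circ f$, so $\Exp(p_1)\circ\Log(1+p_1) = \Exp(\Log(1+p_1))$. The lemma on $\Exp$ and $\Log$ with $f=p_1$ shows this equals $p_1$. Finally $p_1$ is a two-sided unit for plethysm by property (3), so $F\circ p_1 = F$, which is the claimed identity.

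The only step needing real care is the extension of associativity and of the unit property to $\hat{\Lambda}[\![x]\!]$ with the conventions $x\circ f = x$ and $p_n\circ x = x^n$. I would handle it as above, by a generator check plus continuity in the degree topology. The remaining steps are formal.
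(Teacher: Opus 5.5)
Your proof is correct and follows the same argument as the paper. The paper also combines Lemma \ref{lem:sym_to_light} with Corollary \ref{cor:light_univ_formula} and then composes on the right with $\Log(1+p_1)$, the plethystic inverse of $\Exp(p_1)$. Your explicit check of associativity and of the unit property on $\hat{\Lambda}[\![x]\!]$, under the convention $x\circ f = x$, fills in a step the paper leaves implicit.
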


\begin{rem}\label{rem:weight_to_top}
    The formula of Corollary \ref{cor:Sym_formula} holds verbatim if we replace $\chi_k^{\bbS_m}$ with the topological Euler characteristic $\chi^{\bbS_n}_{top}$ on both sides.
\end{rem}

\subsection{The fundamental transformation}
Recall from (\ref{eqn:Transform_defn}) the $\QQ$-algebra homomorphism
\[ \calT: \hat{\Lambda} \to \hat{\Lambda}[\![x]\!] \]
 by
\[p_k \mapsto \frac{p_k + x^k}{1-x^k}.  \]

\begin{lem}\label{lem:transf_formula}
    For any $f \in \hat{\Lambda}$, we have
    \[\calT(f) = \mathrm{inv}\Delta(f \circ \Exp(p_1)) \circ \Log(1 + p_1). \]
\end{lem}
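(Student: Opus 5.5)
Both sides of the identity are $\QQ$-algebra homomorphisms of $f$, and they agree on the generators $p_k$; that is the whole plan. Both sides are also continuous for the degree topology, so agreement on the $p_k$ gives agreement on all of $\hat{\Lambda}$.

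\emph{Step 1: the right-hand side is an algebra homomorphism.} I will check that each of the four operations is one.
\begin{itemize}
\item $f\mapsto f\circ \Exp(p_1)$ is one, by property (1) of plethysm.
\item $\Delta$ is one, by definition.
\item $\mathrm{inv}_1$ is one, since it is a substitution of variables.
\item $h\mapsto h\circ \Log(1+p_1)$ on $\hat{\Lambda}[\![x]\!]$ is one, by property (1) together with the convention $x\circ g = x$.
\end{itemize}
Since $\calT$ is an algebra homomorphism by definition, it suffices to verify the identity for $f = p_k$.

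\emph{Step 2: compute $p_k\circ\Exp(p_1)$ and apply $\Delta$.} With the paper's convention that $\Exp$ omits $h_0$,
\[\Exp(p_1) = \exp\Big(\sum_{j\ge1} p_j/j\Big) - 1.\]
Property (3) and multiplicativity of $p_k\circ(-)$ then give
\[p_k\circ\Exp(p_1) = \exp\Big(\sum_{j} p_{jk}/j\Big) - 1.\]
Note that this equals $\Exp(p_k)$. Applying $\Delta$ gives $\exp\big(\sum_j (p_{jk}+q_{jk})/j\big) - 1$.

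\emph{Step 3: apply $\mathrm{inv}_1$.} This sends $p_{jk}\mapsto x^{jk}$ and $q_{jk}\mapsto p_{jk}$, so the expression factors:
\[\exp\Big(\sum_j x^{jk}/j\Big)\exp\Big(\sum_j p_{jk}/j\Big) - 1 = \frac{1+\Exp(p_k)}{1-x^k} - 1 .\]

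\emph{Step 4: compose with $\Log(1+p_1)$.} Since $x$ is fixed under plethysm, the only thing to compute is $\Exp(p_k)\circ\Log(1+p_1)$. Using associativity of plethysm,
\[\Exp(p_k)\circ\Log(1+p_1) = \big(p_k\circ\Exp(p_1)\big)\circ\Log(1+p_1) = p_k\circ \Exp\big(\Log(1+p_1)\big) = p_k\circ p_1 = p_k.\]
Here I use $\Exp(p_1)\circ g = \Exp(g)$ and the inversion lemma $\Exp(\Log(1+p_1)) = p_1$. The composition therefore equals
\[\frac{1+p_k}{1-x^k} - 1 = \frac{p_k + x^k}{1-x^k} = \calT(p_k),\]
which finishes the proof.

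The only delicate points are bookkeeping. First, one must track the $-1$ coming from the convention that $\Exp$ omits $h_0$. Second, one must justify associativity of plethysm and the homomorphism property in the completed setting $\hat{\Lambda}[\![x]\!]$. I would handle the second by working degree by degree (in the total degree in the $p$'s and $x$), where everything reduces to finite computations in $\Lambda[x]$.
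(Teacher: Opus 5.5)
Your proof is correct and follows the paper's argument: reduce to $f = p_k$, identify $p_k\circ\Exp(p_1) = \Exp(p_k)$, compute $\mathrm{inv}\Delta$ of it as $(1+\Exp(p_k))(1-x^k)^{-1} - 1$, and finish with $\Exp(p_k)\circ\Log(1+p_1) = p_k$. The one difference is that you explicitly justify the reduction to generators (both sides are continuous $\QQ$-algebra homomorphisms), which the paper leaves implicit.
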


\begin{proof}
    We use the fundamental symmetric function identity
    \[\sum_{n \geq 0} h_n = \exp\left(\sum_{n > 0} \frac{p_n}{n} \right) \]
    to find that
    \[\Exp(p_k) = \exp\left( \sum_{n > 0} \frac{p_{kn}}{n}\right) - 1  = p_k \circ \Exp(p_1).\]
Since \[\mathrm{inv}\Delta(p_j) = p_j + x^j \]
for any $j$, we find
\begin{align*}
         \mathrm{inv}\Delta(p_k \circ \Exp(p_1)) &= \exp\left( \sum_{n > 0} \frac{p_{kn} + x^{kn}}{n} \right) - 1
         \\&= \exp\left(\sum_{n > 0} \frac{p_{kn}}{n} \right)(1 - x^k)^{-1} - 1 \\&= (\Exp(p_k) + 1)(1 - x^{k})^{-1} - 1.
    \end{align*}
    Now simply observe that
    \[ \Exp(p_k) \circ \Log(1 + p_1) = p_k \circ \Exp(p_1) \circ \Log(1 + p_1) = p_k, \]
    so
    \[\mathrm{inv}\Delta(p_k \circ \Exp(p_1)) \circ \Log(1 + p_1) = (p_k + 1)(1- x^{k})^{-1 } - 1 = \frac{p_k + x^k}{1 -x^k} = \calT(p_k), \]
    as desired.
\end{proof}
We will now prove Theorem \ref{thm:general_transform}, which we restate for convenience.
\begin{customthm}{1}
For any $k$,
\[ \lim_{x \to 1}(1-x)\calT(\cat{Z}_g^k) = \sum_{i = 0}^{\floor{k/2}} \cat{J}_g^{k - 2i}. \]
The limit is interpreted as multiplying by $(1 -x)$ and then evaluating at $x= 1$.
\end{customthm}
\begin{proof}
From Corollary \ref{cor:Sym_formula} and Lemma \ref{lem:transf_formula}, we find
\[ \calT (\cat{Z}_g^k) = \sum_{m,n} \chi^{\bbS_n}_k(\calS_{g, n}^m)x^m \]
By Proposition \ref{prop:sym_to_pic_weights}, we have the formula
\[ \sum_{m,n}\chi_k^{\bbS_n}(\calS_{g, n}^m) = \sum_{n} \sum_{i = 0}^{m - g} \chi_{k - 2i}^{\bbS_n}(\Pic_{g, n}) = \sum_{i = 0}^{\floor{k/2}} \cat{J}_g^{k - 2i} \]
for $m \gg 0$. Therefore
\[ \calT(\cat{Z}_{g}^k) = P(x) + \sum_{i = 0}^{\floor{k/2}} \left(\cat{J}_g^{k - 2i}\right) \cdot\frac{x^N}{1-x} \]
for some $N > 0$ and a polynomial $P(x) \in \hat{\Lambda}[x]$. The desired statement follows immediately.
\end{proof}

We are now ready to prove Corollary \ref{cor:Jgformula}, whose statement we recall.
\begin{customcor}{2}
Suppose $g \geq 2$. The generating function $\cat{J}_g^0$ is obtained from the expression for $\cat{Z}_{g}^0$ in \cite[Theorem 1.1]{CFGP} by restricting the sum to those Laurent monomials in the $P_j$ whose exponents sum to $1$, and then making the substitution $P_j \mapsto P_j/j$ for all $j$. Specifically,
\[ \cat{J}_g^0 = \sum_{k, m, s, a, d} \frac{(-1)^{k}m^{k-1} (k-1)!}{P_m^k}  \prod_{i = 1}^{s} \frac{\mu(m/d_i)^{a_i} P_{d_i}^{a_i}}{d_i^{a_i}a_i!}. \]
where the sum is over integers $k, m, s > 0$ and $s$-tuples of positive integers $a = (a_1, \ldots, a_s)$ and $d = (d_1, \ldots, d_s)$ with $\gcd(d_1, \ldots, d_s) = 1$ and $d_i \mid m$ for each $i$, such that $0 < d_1 < \cdots < d_s < m$, $\sum_i a_i = k + 1$, and $\sum_i a_i d_i = km + 1 - g$.
\end{customcor}
\begin{proof}
 We analyze the substitution $P_k \mapsto P_k(1-x^k)^{-1}$ in the formula in \cite{CFGP} monomial-by-monomial. For any $\alpha \in \QQ$, this substitution takes
\[ \alpha \cdot \frac{1}{P_m^k} \prod_{i = 1}^{s} P_{d_i}^{a_i} \mapsto \alpha\cdot \frac{(1 - x^m)^k}{P_{m}^k} \prod_{i = 1}^s P_{d_i}^{a_i}(1 - x^{d_i})^{-a_i}. \]
Notice that the order of vanishing of this expression at $x= 1$ is
\[ k - \sum_{i = 1}^{s}a_i. \]
Thus if $k - \sum_{i = 1}^{s}a_i \geq 0$, we have
\[ \lim_{x \to 1} (1-x) \cdot \alpha\cdot \frac{(1 - x^m)^k}{P_{m}^k} \prod_{i = 1}^s P_{d_i}^{a_i}(1 - x^{d_i})^{-a_i} = 0. \]

When $k - \sum_{i = 1}^{s}a_i = -1$, we find
\[\lim_{x \to 1} (1 - x) \cdot \alpha\cdot \frac{(1 - x^m)^k}{P_{m}^k} \prod_{i = 1}^s P_{d_i}^{a_i}(1 - x^{d_i})^{-a_i} = \alpha\cdot \frac{m^k}{P_{m}^k} \prod_{i = 1}^s \frac{P_{d_i}^{a_i}}{d_i^{a_i}}. \]
This calculation follows directly from the factorization
\[ 1 - x^{\ell} = (1 - x)(1 + x + x^2 + \cdots + x^{\ell-1}). \]
Finally, notice that when $k -\sum_{i = 1}^{s}a_i = -1$, the product
\[ \prod_{p \mid (d_1, d_2, \ldots, d_s, m)} \left(1 - \frac{1}{p^r} \right) \]
appearing in \cite[Theorem 1.1]{CFGP} vanishes unless it is empty, when it is equal to $1$ (this is because $r = 1 + k - \sum_{i = 1}^{s}a_i$ in their formula). Thus only the terms with $\mathrm{gcd}(d_1, \ldots, d_s) = 1$ survive. Direct inspection of the formula in \cite{CFGP} reveals that no monomials with $k - \sum_{i = 1}^{s}a_i < -1$ appear, which justifies this monomial-by-monomial approach. 
\end{proof}
We are left to prove Theorem \ref{thm:top_chi}.

\begin{customthm}{3}\label{thm:top_chi}
Suppose $g \geq 2$. The generating function $\cat{J}_g^{top}$ is obtained from Gorsky's formula \cite[Theorem 3.8]{Gorsky} for $\cat{Z}_g^{top}$ by restricting the sum to Laurent monomials in the $P_j$ whose exponents sum to $2$, and then making the substitution $P_j \mapsto P_j/j$ for all $j$. Explicitly,
    \[ \cat{J}_g^{top} = \sum_{r = 2}^{4g +2} \sum_{s = 3}^{2g + 2} \sum_{\substack{{k_1 + \ldots + k_{r - 1} = s}\\ {\sum_i ik_i + r(2 - s)  = 2 - 2g}\\ {k_i \geq 0 \,\forall\, i}\\{i \mid r \text{ if }k_i > 0}\\{\mathrm{gcd}(i \mid k_i > 0) = 1}}}\left( (-1)^{s - 3}(s-3)! \frac{N(r;k_1, \ldots, k_{r-1})}{\prod_{i = 1}^{r - 1}i^{k_i}k_i!} \cdot r^{s - 3} \right) P_r^{2 - s} \prod_{i = 1}^{r - 1}P_{i}^{k_i}, \]
where
\[N(r; k_1, \ldots, k_{r - 1}) = \frac{1}{r} \sum_{d \mid r} \varphi(d) \prod_{i = 1}^{r-1} \left(\frac{\mu (d/\gcd(d, i)) \cdot\varphi(r/i)}{\varphi(d/\gcd(d, i))}\right)^{k_i}. \]
\end{customthm}
\begin{proof}
    We use that
    \[\calS_{g,n}^m \to \Pic_{g, n}^m \]
    is a $\P^{m-g}$-bundle for $m \geq 2g - 1$, together with the topological version of Corollary \ref{cor:Sym_formula} (cf. Remark \ref{rem:weight_to_top}) to find that there is a polynomial $Q(x) \in \hat{\Lambda}[x]$ such that
    \begin{align*}
        \calT(\cat{Z}_g^{top})& = Q(x) + \cat{J}_g^{top} \sum_{m \geq 2g - 1} (m - g + 1) x^m  \\&= Q(x) +  \cat{J}_g^{top} \cdot \frac{x^{2g-1}\big(g-(g-1)x\big)}{(1-x)^2}.
    \end{align*}
    Thus
    \[ \lim_{x \to 1} (1-x)^2\calT(\cat{Z}_g^{top}) = \cat{J}_g^{top}.  \]
    Using the same logic as in the proof of Corollary \ref{cor:Jgformula}, we find that a monomial $M$ in the $P_i$'s must have its sum of exponents equal to $2$ for the limit
    \[ \lim_{x \to 1}(1-x)^2M|_{P_k \to P_k(1 - x^k)^{-1}\,\forall \,k} \]
    to converge to a nonzero value. Also by the same reasoning as in the proof of Corollary \ref{cor:Jgformula}, we find that taking the limit for such monomials amounts to the substitution $P_j \mapsto P_j/j$. One also finds from equation (3) of the proof of \cite[Proposition 3.1]{Gorsky} that the sum of the exponents in Gorsky's formula is always bounded above by $2$; this justifies the outlined procedure for taking the limit. The condition on the greatest common divisors of the $i$ such that $k_i > 0$ comes from the product
    \[ \prod_{p \mid \gcd(i \mid k_i > 0)} \left( 1 - \frac{1}{p^{2h}}\right) \]
    in Gorsky's formula, for a parameter $h$ which equals $0$ when the exponents sum to $2$. The factor $(-1)^{s-3}(s - 3)!$ corresponds to $\chi^{orb}(\M_{0, s})$ in his formula, and the conditions on $k_i$ appearing in the sum are implicit in his definition of the constants $c_{k_1, \ldots, k_r}$.
\end{proof}

\begin{rem}
    There are superficial differences between our formula and Gorsky's \cite[Theorem 3.8]{Gorsky}. There is one very minor typo in Gorsky's formula: the definition of $h$ should read
    \[ h = 1 - \frac{1}{2} \sum_{i = 1}^{r - 1} k_i, \]
    instead of $(1/2) \cdot(1 - \sum_{i = 1}^{r-1}k_i)$. Also, Gorsky uses the notation $N(r;\lambda)$ for what we denote as $N(r;k_1, \ldots, k_{r - 1})$, where $\lambda = (\lambda_1, \ldots, \lambda_{s})$ is the partition 
    \[\lambda = \left(\underbrace{1, \ldots, 1}_{k_1}, \underbrace{2, \ldots, 2}_{k_2}, \ldots, \underbrace{r-1, \ldots, r-1}_{k_{r - 1}}\right). \]
    In \cite[Definition 3.5]{Gorsky}, he defines
    \[ N(r;\lambda) :=\left| \left\{(x_1, x_2, \ldots, x_s) \in (\Z/r\Z)^s \mid \sum_{i = 1}^{s} x_i \equiv 0 \pmod{r},\quad \gcd(k, x_i) = \lambda_i  \,\forall i  \right\}\right|; \]
    in this definition $k$ should be replaced by $r$. Finally, having defined
    \[ c(k, \ell, d) := \mu\left( \frac{d}{\gcd(d, \ell)}\right) \cdot\frac{\varphi(k/\ell)}{\varphi(d/\gcd(d, \ell))}, \] his formula for $N(r, \lambda)$ in \cite[Lemma 3.6]{Gorsky} reads
    \[N(r; \lambda) = \frac{1}{r}\sum_{d \mid r}\varphi(d)\prod_{i = 1}^s c(k, \lambda_i, d),\]
    and $k$ should read as $r$. After these substitutions, our formula is extracted directly from his by the transformation defined in Theorem \ref{thm:top_chi}.
\end{rem}

\begin{table}[h]
\centering
\renewcommand{\arraystretch}{2}
\begin{tabular}{c l}
\toprule
$g$ & \multicolumn{1}{c}{$\cat{J}_g^{0}$} \\\midrule

2 &
$\displaystyle
-\frac{1}{6}\frac{P_2 P_3}{P_6}
-\frac{1}{2}\frac{P_1^2}{P_3}
-\frac{1}{3}\frac{P_1^3}{P_2^2}$ \\
\midrule
3 &
$\displaystyle
\frac{1}{3}\frac{P_1 P_3}{P_6}
-\frac{1}{3}\frac{P_1^4}{P_2^3}$ \\
\midrule
4 &
$\displaystyle
-\frac{1}{10}\frac{P_2 P_5}{P_{10}}
+\frac{1}{2}\frac{P_1 P_2}{P_6}
-\frac{1}{2}\frac{P_1^2}{P_5}
-\frac{1}{2}\frac{P_1^3}{P_3^2}
-\frac{2}{5}\frac{P_1^5}{P_2^4}$ \\
\midrule
5 &
$\displaystyle
\frac{1}{5}\frac{P_1 P_5}{P_{10}}
-\frac{1}{6}\frac{P_2 P_3^2}{P_6^2}
-\frac{1}{2}\frac{P_1^2}{P_6}
-\frac{8}{15}\frac{P_1^6}{P_2^5}$ \\
\midrule
6 &
$\displaystyle
-\frac{1}{14}\frac{P_2 P_7}{P_{14}}
-\frac{1}{2}\frac{P_1^2}{P_7}
-\frac{1}{4}\frac{P_2^2 P_3}{P_6^2}
+\frac{1}{3}\frac{P_1 P_3^2}{P_6^2}
-\frac{3}{4}\frac{P_1^4}{P_3^3}
-\frac{16}{21}\frac{P_1^7}{P_2^6}$ \\
\midrule
7 &
$\displaystyle
\frac{1}{7}\frac{P_1 P_7}{P_{14}}
+\frac{P_1 P_2 P_3}{P_6^2}
-\frac{8}{7}\frac{P_1^8}{P_2^7}$ \\
\midrule
8 &
$\displaystyle
-\frac{1}{15}\frac{P_3 P_5}{P_{15}}
+\frac{1}{2}\frac{P_1 P_2}{P_{10}}
-\frac{2}{9}\frac{P_2 P_3^3}{P_6^3}
+\frac{3}{4}\frac{P_1 P_2^2}{P_6^2}
-\frac{P_1^2 P_3}{P_6^2}
-\frac{5}{6}\frac{P_1^3}{P_5^2}
-\frac{27}{20}\frac{P_1^5}{P_3^4}
-\frac{16}{9}\frac{P_1^9}{P_2^8}$ \\
\midrule
9 &
$\displaystyle
-\frac{1}{10}\frac{P_2 P_5^{2}}{P_{10}^{2}}
-\frac{1}{2}\frac{P_1^{2}}{P_{10}}
-\frac{1}{2}\frac{P_2^{2} P_3^{2}}{P_6^{3}}
+\frac{4}{9}\frac{P_1 P_3^{3}}{P_6^{3}}
-\frac{3}{2}\frac{P_1^{2} P_2}{P_6^{2}}
-\frac{128}{45}\frac{P_1^{10}}{P_2^{9}}$ \\

\bottomrule
\end{tabular}
\caption{$\cat{J}_g^0$ for $2 \leq g \leq 9$}
\label{table:wt0data}
\end{table}

\begin{table}[ht]
\centering
\renewcommand{\arraystretch}{1}
\begin{tabular}{@{}c p{0.82\textwidth}@{}}
\toprule
$g$ & \multicolumn{1}{c}{$\cat{J}_g^{top}$} \\
\midrule
$2$ &
\[
\begin{aligned}
\frac{2}{5}\,\frac{P_1 P_2 P_5}{P_{10}}
&+ \frac{1}{2}\,\frac{P_1^{2} P_4}{P_8}
- \frac{1}{12}\,\frac{P_2^{2} P_3^{2}}{P_6^{2}}
+ \frac{1}{2}\,\frac{P_1^{2} P_2}{P_6} \\[4pt]
&+ 2\,\frac{P_1^{3}}{P_5}
- \frac{1}{2}\,\frac{P_1^{2} P_2^{2}}{P_4^{2}}
- \frac{3}{4}\,\frac{P_1^{4}}{P_3^{2}}
- \frac{1}{15}\,\frac{P_1^{6}}{P_2^{4}}
\end{aligned}
\]
\\
\midrule
$3$ &
\[
\begin{aligned}
\frac{3}{7}\,\frac{P_1 P_2 P_7}{P_{14}}
&+ \frac{1}{3}\,\frac{P_1 P_3 P_4}{P_{12}}
+ \frac{1}{3}\,\frac{P_1^{2} P_6}{P_{12}}
+ 2\,\frac{P_1^{2} P_3}{P_9} \\[4pt]
&+ 2\,\frac{P_1^{2} P_2}{P_8}
+ 5\,\frac{P_1^{3}}{P_7}
- \frac{1}{2}\,\frac{P_1 P_2^{2} P_3}{P_6^{2}}
- \frac{1}{3}\,\frac{P_1^{2} P_3^{2}}{P_6^{2}} \\[4pt]
&+ \frac{2}{3}\,\frac{P_1^{2} P_2^{3}}{P_4^{3}}
- \frac{4}{3}\,\frac{P_1^{4}}{P_4^{2}}
+ \frac{3}{2}\,\frac{P_1^{5}}{P_3^{3}}
- \frac{2}{21}\,\frac{P_1^{8}}{P_2^{6}}
\end{aligned}
\]
\\
\midrule
$4$ &
\[
\begin{aligned}
\frac{1}{3}\,\frac{P_1 P_2 P_9}{P_{18}}
&+ \frac{1}{2}\,\frac{P_1^{2} P_8}{P_{16}}
+ \frac{8}{15}\,\frac{P_1 P_3 P_5}{P_{15}}
+ \frac{2}{3}\,\frac{P_1 P_2 P_3}{P_{12}} \\[4pt]
&+ \frac{1}{2}\,\frac{P_1^{2} P_4}{P_{12}}
- \frac{1}{10}\,\frac{P_2^{2} P_5^{2}}{P_{10}^{2}}
+ 3\,\frac{P_1^{2} P_2}{P_{10}}
+ 3\,\frac{P_1^{3}}{P_9} \\[4pt]
&- \frac{1}{2}\,\frac{P_1^{2} P_4^{2}}{P_8^{2}}
+ \frac{1}{6}\,\frac{P_2^{3} P_3^{2}}{P_6^{3}}
+ \frac{4}{9}\,\frac{P_1 P_2 P_3^{3}}{P_6^{3}}
- \frac{9}{4}\,\frac{P_1^{2} P_2^{2}}{P_6^{2}} \\[4pt]
&- \frac{2}{3}\,\frac{P_1^{3} P_3}{P_6^{2}}
- \frac{65}{6}\,\frac{P_1^{4}}{P_5^{2}}
- \frac{P_1^{2} P_2^{4}}{P_4^{4}}
+ \frac{16}{3}\,\frac{P_1^{4} P_2}{P_4^{3}} \\[4pt]
&- \frac{99}{20}\,\frac{P_1^{6}}{P_3^{4}}
- \frac{8}{45}\,\frac{P_1^{10}}{P_2^{8}}
\end{aligned}
\]
\\
\bottomrule
\end{tabular}
\caption{$\cat{J}_g^{top}$ for $g = 2, 3, 4$}
\label{table:Jgtop}
\end{table}

\bibliographystyle{amsalpha}
\clearpage
\bibliography{reference}

\end{document}